\Crefname{equation}{Formula}{Formulas}
\renewcommand*\env@matrix[1][*\c@MaxMatrixCols c]{%
  \hskip -\arraycolsep
  \let\@ifnextchar\new@ifnextchar
  \array{#1}}
\newtheorem{proposition}{Proposition}[section]
\newtheorem{conjecture}{Conjecture}[section]
\newtheorem{lemma}{Lemma}[section]
\theoremstyle{definition}
\newtheorem{remark}{Remark}[section]
\newtheorem{example}{Example}[section]
\newtheorem{question}{Question}[section]
\DeclareRobustCommand*{\mfaktor}[3][]
{
   { \mathpalette{\mfaktor@impl@}{{#1}{#2}{#3}} }
}
\newcommand*{\mfaktor@impl@}[2]{\mfaktor@impl#1#2}
\newcommand*{\mfaktor@impl}[4]{
   \settoheight{\faktor@zaehlerhoehe}{\ensuremath{#1#2{#3}}}%
   \settoheight{\faktor@nennerhoehe}{\ensuremath{#1#2{#4}}}%
      \raisebox{-0.5\faktor@zaehlerhoehe}{\ensuremath{#1#2{#3}}}%
      \mkern-4mu\diagdown\mkern-5mu%
      \raisebox{0.5\faktor@nennerhoehe}{\ensuremath{#1#2{#4}}}%
}
\title{A counterexample to a Brenti-Carnevale conjecture}
\author{
Nathan Chapelier-Laget, Jean Fromentin
}
\begin{document}

\maketitle

\begin{abstract}
Recently, F. Brenti put a preprint on the arXiv with several interesting open problems on Coxeter groups and unimodality \cite{brenti2024conj}. In this note, we refute one of these conjectures with a counterexample and provide supporting data related to it. This work serves as an initial step toward further exploration of the topic.
\end{abstract}

\section{General background}

\subsection{Finite Coxeter groups}
Let $(W,S)$ be a finite Coxeter system with length function $\ell$. Let $\Phi = \Phi^+ \sqcup \Phi^-$ be the root system of $W$ with simple system $\Delta$. Let $\alpha \in \Phi$ such that  $\alpha = a_1\alpha_1 + \cdots + a_n\alpha_n$ with $a_i \in \mathbb{Z}$. The height of $\alpha$ (with respect to $\Delta$) is defined by $\mathrm{ht}(\alpha) = a_1 + \cdots+ a_n$. Clearly $\mathrm{ht}(\alpha + \beta) = \mathrm{ht}(\alpha) + \mathrm{ht}(\beta)$ and $\mathrm{ht}(-\alpha)=-\mathrm{ht}(\alpha)$ for all $\alpha, \beta, \alpha + \beta \in \Phi$. We let $w_0$ be the longest element of $W$. This is the only element in $W$ satisfying $w_0(\Phi^+) = \Phi^-$, or equivalently $w_0(\Delta) = -\Delta$. Moreover $w_0^{-1} = w_0$, and for any $\alpha \in \Phi$ we have $\mathrm{ht}(w_0(\alpha)) = -\mathrm{ht}(\alpha)$.
Let $T$ be the set of reflections of $W$, that is $T = \{wsw^{-1}~|~s \in S, w \in W\}$. There is a natural bijection between $T$ and $\Phi^+$, and for 
$\alpha \in \Phi^+$, we denote by $s_{\alpha} \in T$ the corresponding reflection.

\medskip

The left-inversion set and left-descent set of an element $w \in W$ are defined respectively by 
\begin{align*}
    N_L(w)  &:= \{t \in T ~|~\ell(tw) < \ell(w)\} \\
     D_L(w) & : = \{s \in S ~|~\ell(sw) < \ell(w)\}.
\end{align*}
Similarly, the right-inversion set and right-descent set are defined by 
\begin{align*}\label{right Inv set}
    N_R(w)  &:= \{t \in T ~|~\ell(wt) < \ell(w)\}\\
    D_R(w) & : = \{s \in S ~|~\ell(ws) < \ell(w)\}.
\end{align*}

\medskip

Let $I,J \subset S$ and let $(W_I,S_I)$ and $(W_J,S_J)$ be the corresponding parabolic subgroups of $W$. We denote by  
$$
{}^JW = \{w \in W~|~\ell(sw) > \ell(w)~\forall s \in J\}
$$
and
 $$
 W^I = \{w \in W~|~\ell(ws) > \ell(w)~\forall s \in I\}.
 $$
 The set ${}^JW $ is called the $J$-left transverse set of $W$ and $W^I$ is called the $I$-right transverse set of $W$. Another description of the transverse sets can be found in \cite[Section 2.5]{SRLE}.

Each element $w \in W$ has unique decomposition $w = w_J{}^Jw$ with $w_J \in W_J$, ${}^Jw \in {}^JW $ called the left $J$-decomposition of $w$, and has also a unique decomposition  $w = w^Iw_I{}$ with $w_I \in W_I$, $w^I \in W^I $ called the right $I$-decomposition of $w$.  These decompositions satisfy 
$$
\ell(w) = \ell(w_J) + \ell({}^Jw) = \ell(w^I) + \ell(w_I).
$$
The element ${}^Jw$ is the unique element of minimal length in $W_Jw$ and $w^I$ is the unique element of minimal length in $wW_I$.  

\bigskip

\subsection{The odd-length in type $A_n$}\label{section type A}
Let $W(A_n)$ be the finite Coxeter system of type $A_n$, with set of simple generators $S = \{s_i~|~i =1,\dots, n\}$ where $s_i$ is the adjacent transposition $(i,i+1)$. Let $(e_i)_i$ be the canonical basis of $\mathbb{R}^{n+1}$. A way to describe the $\frac{n(n+1)}{2}$ roots of $W(A_n)$ is by
$
\Phi=\{\pm (e_i - e_j) ~| ~ 1\leq i < j \leq n+1 \},
$
with simple system 
$\Delta = \{ e_i - e_{i+1} ~| ~ 1\leq i <  n +1\},$
and  positive roots 
$$
\Phi^+=\{e_i - e_j ~| ~ 1\leq i < j \leq n+1 \}.
$$

For short, we right $e_{ij}:=e_i - e_j$. It is easy to see, for any $e_{ij} \in \Phi^+$, that
\begin{align}
    \mathrm{ht}(e_{ij}) = j-i.
\end{align}

Let $\mathcal{I}(w)$ be the \say{usual} inversion set of $w \in W(A_n)$, namely
\begin{align}
    \mathcal{I}(w):= \{(i,j) \in [n+1]^2~|~i<j~\text{~and~}~w(i)>w(j)\}.
\end{align}

The group $W(A_n)$ acts on $\Phi$ by $w(e_{i,j}) = e_{w(i),w(j)}$. Therefore, we have a natural identification between $N_R(w)$ and $\mathcal{I}(w)$, which is defined by $e_{ij} \mapsto (i,j)$.

\bigskip

In \cite[Definition 5.1]{klopsch2009igusa}, B. Klopsch and C. Voll introduced the statistic $L$ as follows
\begin{equation}\label{odd-length type A}
\begin{array}{ccccc}
      L & : & W(A_n) & \longrightarrow & \mathbb{N}  \\
                    &   & w & \longmapsto     & \sum\limits_{J \subset S}(-1)^{|J|}2^{|S|-|J|-1}\ell({}^Jw).
\end{array}
\end{equation}

Note that, a priori, it is not immediately clear that 
$L$ takes only nonnegative values. However, this follows as a consequence of the next lemma.

\begin{lemma}{\cite[Lemma 5.2]{klopsch2009igusa}}
    Let $w \in W(A_n)$. We have
    \begin{equation}
        L(w) = |\{(i,j) \in \mathcal{I}(w)~|~j-i = 1 ~\mathrm{mod} ~2\}|.
    \end{equation}
\end{lemma}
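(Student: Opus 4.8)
The plan is to prove the identity by a Fubini-type interchange of the two summations hidden in the definition of $L$, followed by an elementary evaluation of the resulting inner alternating sums. The substantial first step is to rewrite the summand, the length of the minimal-length coset representative, as an explicit count of inversions of $w$. Using the decomposition into a parabolic part and a minimal representative, together with $\ell(w)=\ell(w^J)+\ell(w_J)$ from the excerpt and the identification $N_R(w)\simeq\mathcal I(w)$, I would show that the minimal representative has length
\[
|N_R(w)| - |N_R(w)\cap\Phi_J| = \bigl|\{(i,j)\in\mathcal I(w)\mid \{s_i,\ldots,s_{j-1}\}\not\subseteq J\}\bigr|,
\]
where $\Phi_J$ is the sub-root-system generated by $\{\alpha_s\mid s\in J\}$. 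The key point justifying the first equality is that $N_R(w)\cap\Phi_J=N_R(w_J)$ (since the minimal representative sends $\Phi_J^+$ into $\Phi^+$, so a positive root of $\Phi_J$ is an inversion of $w$ exactly when it is an inversion of the parabolic part $w_J$). Passing through the correspondence $e_{ij}\leftrightarrow(i,j)$ and the expansion $e_{ij}=\alpha_i+\cdots+\alpha_{j-1}$, membership $e_{ij}\in\Phi_J$ is equivalent to $\{s_i,\ldots,s_{j-1}\}\subseteq J$.

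Substituting this count into the definition of $L$ and interchanging the order of summation yields
\[
L(w)=\sum_{(i,j)\in\mathcal I(w)}\ \sum_{J\subseteq S}(-1)^{|J|}\,2^{|S|-|J|-1}\,\mathbf 1\bigl[R_{ij}\not\subseteq J\bigr],\qquad R_{ij}:=\{s_i,\ldots,s_{j-1}\}.
\]
The crucial observation is that $|R_{ij}|=\mathrm{ht}(e_{ij})=j-i$, so the parity controlling each inner sum is precisely the parity of $j-i$. Thus the whole problem reduces to evaluating, for a fixed subset $R_{ij}\subseteq S$ of size $j-i$, the inner alternating sum restricted by the condition $R_{ij}\not\subseteq J$.

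I would compute this inner sum as the unrestricted sum minus the sum over $J\supseteq R_{ij}$. Writing $n=|S|$, the binomial theorem gives
\[
\sum_{J\subseteq S}(-1)^{|J|}2^{n-|J|-1}=\tfrac12(2-1)^{n}=\tfrac12,\qquad \sum_{J\supseteq R_{ij}}(-1)^{|J|}2^{n-|J|-1}=\tfrac{(-1)^{|R_{ij}|}}{2}(2-1)^{\,n-|R_{ij}|}=\tfrac{(-1)^{j-i}}{2},
\]
where the second evaluation uses the substitution $J=R_{ij}\sqcup J'$ with $J'\subseteq S\setminus R_{ij}$. Hence the inner sum equals $\tfrac{1-(-1)^{j-i}}{2}$, which is $1$ when $j-i$ is odd and $0$ otherwise. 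Summing over $\mathcal I(w)$ therefore retains exactly the inversions with $j-i$ odd, giving the asserted formula.

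The routine parts are the two binomial evaluations and the interchange of summation. The genuine obstacle is the first step: establishing the combinatorial length formula for the minimal-length coset representative and, above all, pinning down that the parameter governing the surviving term is the height $\mathrm{ht}(e_{ij})=j-i$ of the associated root rather than any other statistic of the inversion. Once that identification is secured, the alternating sum collapses cleanly and only the parity of $j-i$ remains.
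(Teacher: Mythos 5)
The paper does not actually prove this lemma: it is quoted from Klopsch--Voll without argument, so there is no in-paper proof to compare against. Your argument is the standard one and, as a piece of mathematics, it is essentially complete and correct: the length of the minimal coset representative equals the number of inversions whose root lies outside $\Phi_J$, the support of $e_{ij}=\alpha_i+\cdots+\alpha_{j-1}$ is the interval $\{s_i,\dots,s_{j-1}\}$ of size $\mathrm{ht}(e_{ij})=j-i$, and the two binomial evaluations $\sum_{J\subseteq S}(-1)^{|J|}2^{n-|J|-1}=\tfrac12$ and $\sum_{J\supseteq R}(-1)^{|J|}2^{n-|J|-1}=\tfrac{(-1)^{|R|}}2$ are right, so the inner sum collapses to the parity indicator $\tfrac{1-(-1)^{j-i}}2$ exactly as you say.

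One caveat you must address in a careful write-up: there is a left--right mismatch between the definition you are fed and the identity you prove. The paper's Formula (\ref{odd-length type A}) uses $\ell({}^Jw)$ with ${}^Jw$ minimal in $W_Jw$ (left decomposition $w=w_J\,{}^Jw$), whereas your key identity
$\ell=|N_R(w)|-|N_R(w)\cap\Phi_J|$
is the one satisfied by $w^J$, the minimal representative of $wW_J$; it is $w^J$, not ${}^Jw$, whose length counts the \emph{right} inversions of $w$ outside $\Phi_J$ (for ${}^Jw$ the analogous statement holds with $N_L(w)=N_R(w^{-1})$). The two alternating sums are \emph{not} equal pointwise: for $w=2413$ in $W(A_3)$ one gets $\sum_J(-1)^{|J|}2^{n-|J|-1}\ell(w^J)=1$ but $\sum_J(-1)^{|J|}2^{n-|J|-1}\ell({}^Jw)=3$, matching $\mathfrak{inv}_{2,1}(w)$ and $\mathfrak{inv}_{2,1}(w^{-1})$ respectively. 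So what you have proved is the version of the lemma with $w^J$ (equivalently, the paper's version applied to $w^{-1}$); the statement as literally transcribed here, combining ${}^Jw$ with $\mathcal I(w)$, is off by an inverse. This is an inconsistency you inherit from the transcription rather than an error in your reasoning, and it is harmless for the paper's purposes since $w\mapsto w^{-1}$ is a bijection and the generating polynomial $B_n(q)$ is unchanged; but you should either run your argument with $N_L$ and $\mathcal I(w^{-1})$ throughout, or state explicitly that you are using the right decomposition and why the discrepancy does not matter.
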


\bigskip

\subsection{The Brenti-Carnevale conjecture} The map $L$ was first introduced as a tool to study some symmetries enjoyed by polynomials enumerating nondegenerate flags in finite vector spaces. More recently, the map $L$ has been further investigated \cite{brenti2024conj, brenti2017proof, brenti2021odd}, leading, among others, to some interesting properties and conjectures.

\medskip

\begin{proposition}{\cite[Proposition 2.2]{brenti2021odd}}\label{prop propriété type A}
The map $L$ satisfies the following properties
    \begin{itemize}
        \item[(1)] $L(e) = 0$,
          \item[(2)] $L(s) = 1$ for any $s \in S$,
        \item[(3)] $L(w_0w) = L(ww_0) = L(w_0)-L(w)$ for any $w \in W(A_n)$,
        \item[(4)] $w_0$ is the unique element on which $L$ reaches its maximum $L(w_0) = \lfloor \frac{n+1}{2} \rfloor \lceil \frac{n+1}{2} \rceil$.
    \end{itemize}
\end{proposition}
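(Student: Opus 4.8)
The plan is to derive all four properties from the combinatorial description of $L$ furnished by the preceding lemma, namely that $L(w)$ equals the number of \emph{odd inversions} of $w$, by which I mean pairs $(i,j) \in \mathcal{I}(w)$ with $j-i$ odd. I would write $P = \{(i,j) : 1 \le i < j \le n+1,\ j - i \text{ odd}\}$ for the set of all \emph{odd pairs}, so that $L(w) = |\mathcal{I}(w) \cap P|$. Properties (1) and (2) are then immediate: the identity has empty inversion set, while $s_i = (i,i+1)$ has the single inversion $(i,i+1)$, whose difference $1$ is odd, so that $L(s_i) = 1$.

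For property (3) I would first record that $w_0$ is the reversal $w_0(i) = n+2-i$. For the left translate, a pair $(i,j)$ with $i<j$ lies in $\mathcal{I}(w_0 w)$ precisely when $n+2-w(i) > n+2-w(j)$, that is when $w(i) < w(j)$; hence $\mathcal{I}(w_0 w)$ is exactly the complement of $\mathcal{I}(w)$ inside the set of all pairs $\{(i,j): i<j\}$. Intersecting with $P$ gives directly $L(w_0 w) = |P| - L(w)$. For the right translate I would instead use the involution $(i,j) \mapsto (n+2-j,\, n+2-i)$, which preserves both the condition $i<j$ and the difference $j-i$, and which carries $\mathcal{I}(w w_0)$ onto the complement of $\mathcal{I}(w)$; the same bookkeeping yields $L(w w_0) = |P| - L(w)$. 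Taking $w = e$ shows $|P| = L(w_0)$, completing (3).

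For property (4), the inequality $L(w) = |\mathcal{I}(w) \cap P| \le |P| = L(w_0)$ is clear, so the maximum is attained at $w_0$. To evaluate it I would observe that $j-i$ is odd exactly when $i$ and $j$ have opposite parities, so $|P|$ counts the pairs of opposite-parity elements of $[n+1]$, which gives $|P| = \lfloor \tfrac{n+1}{2} \rfloor \lceil \tfrac{n+1}{2} \rceil$ after distinguishing the cases $n+1$ even and odd. For uniqueness, equality $L(w) = L(w_0)$ forces $\mathcal{I}(w) \cap P = P$, so every odd pair is an inversion; in particular every consecutive pair $(i,i+1)$ (difference $1$) is an inversion, meaning $w(1) > w(2) > \cdots > w(n+1)$, hence $w = w_0$.

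All the computations are elementary once the lemma is in hand; the only points requiring care are the parity bookkeeping in (3)—checking that complementation and the involution $(i,j)\mapsto(n+2-j,\,n+2-i)$ both respect membership in $P$—and, in (4), the counting identity $|P| = \lfloor \tfrac{n+1}{2}\rfloor\lceil\tfrac{n+1}{2}\rceil$. I would regard the right-multiplication case $L(w w_0)$ as the main obstacle, since it is the one place where a genuine bijection, rather than plain complementation, is needed to match odd inversions with odd non-inversions.
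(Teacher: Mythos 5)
Your argument is correct in all four parts, and the bookkeeping you flag as delicate (complementation for $L(w_0w)$, the involution $(i,j)\mapsto(n+2-j,\,n+2-i)$ for $L(ww_0)$, and the count $|P|=\lfloor\tfrac{n+1}{2}\rfloor\lceil\tfrac{n+1}{2}\rceil$) all checks out. Note, however, that the paper itself does not prove this proposition: it is quoted from Brenti--Carnevale. What the paper does prove is its generalisation to an arbitrary finite Weyl group, namely the analogous statement for $\mathcal{L}_{k,h}$, and there the route is different from yours: instead of permutation inversions, the authors work with the root inversion set $\Phi(w)=\Phi^+\cap w(\Phi^-)$, use the cocycle identity $\Phi(xy)=\Phi(x)\,\Delta\,|x(\Phi(y))|$ together with $w_0(\Phi^+)=\Phi^-$ and $\mathrm{ht}(w_0(\alpha))=-\mathrm{ht}(\alpha)$ to handle $\mathcal{L}(w_0w)$, and the identity $N(ww_0)=T\setminus N(w)$ for the right translate; your involution $(i,j)\mapsto(n+2-j,\,n+2-i)$ is exactly the type-$A$ shadow of the map $\alpha\mapsto -w_0(\alpha)$, and your complement-of-the-inversion-set observation is the type-$A$ instance of that lemma. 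Your approach buys elementarity and a genuinely self-contained uniqueness argument in (4) (every consecutive pair being an inversion forces $w=w_0$), whereas the paper's uniqueness claim in the general setting is only asserted as ``a direct consequence of (3)''; the paper's approach buys uniformity across all types and all residues $h\bmod k$, at the cost of presupposing the standard facts about $w_0$ and heights.
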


\medskip
A polynomial $P$ is called \textit{unimodal} if the sequence of its coefficients $(a_n)_n$ is unimodal, that is, if there exists $N \in \mathbb{N}$ such that for any $0\leq n \leq N-1$, $a_n \leq a_{n+1}$ and for any $N\leq n$, $a_n \geq a_{n+1}$.
\medskip

\begin{conjecture}{\cite[Conjecture 6.2]{brenti2021odd}}\label{conj 1}
For any $n \geq 4$, the following polynomial is unimodal 
$$
B_n(q) := \sum\limits_{w \in W(A_n)}q^{L(w)}.
$$
\end{conjecture}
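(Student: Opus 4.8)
The plan is to first exploit the palindromicity of $B_n(q)$ and then attempt to upgrade it to unimodality through a stronger structural property. Writing $d = L(w_0) = \lfloor\tfrac{n+1}{2}\rfloor\lceil\tfrac{n+1}{2}\rceil$ and $b_k = [q^k]B_n(q) = |\{w : L(w)=k\}|$, property (3) of \cref{prop propriété type A} gives a bijection $w \mapsto w_0 w$ with $L(w_0w) = d - L(w)$, hence $b_k = b_{d-k}$ for all $k$. Thus $B_n$ is palindromic about $d/2$, and unimodality is equivalent to the single chain of inequalities $b_0 \le b_1 \le \cdots \le b_{\lfloor d/2\rfloor}$. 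At this point I would record a cautionary computation: for $n=3$ one finds $B_3(q) = 1 + 8q + 6q^2 + 8q^3 + q^4$, which is palindromic but \emph{not} unimodal, the central coefficient dipping below its neighbours. Any proof must therefore genuinely use the hypothesis $n \ge 4$ and cannot rest on a naive induction that would apply equally to $n=3$.

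The most appealing route would be to prove a property strictly stronger than unimodality that is both stable and inductively accessible. Two standard candidates for a palindromic polynomial are \emph{real-rootedness} (which forces log-concavity, hence unimodality) and \emph{$\gamma$-nonnegativity}, i.e. an expansion $B_n(q) = \sum_{i\ge 0}\gamma_i\,q^i(1+q)^{d-2i}$ with all $\gamma_i \ge 0$. I would first test these on small cases to decide which is viable, then seek a combinatorial explanation: for $\gamma$-nonnegativity, a valley-hopping involution on $W(A_n)$ whose orbits each contribute a single term $q^i(1+q)^{d-2i}$; for real-rootedness, an interlacing family built from a recursion. A third, purely combinatorial option is to construct, for each $k < d/2$, an explicit injection from $\{w : L(w)=k\}$ into $\{w : L(w)=k+1\}$. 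By the description of \cite[Lemma 5.2]{klopsch2009igusa}, left multiplication by a simple reflection $s_i$ changes $L$ by exactly the toggled inversion status of the single pair of positions $\{w^{-1}(i),w^{-1}(i+1)\}$, which contributes to $L$ precisely when those two positions have opposite parity; one would try to select such an $s_i$ canonically so as to raise $L$ by one, and then argue injectivity.

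To feed any of these into an induction I would need a recurrence for $B_n$, and here the structure of $L$ becomes the central difficulty. Since $L(w)$ counts inversions between odd and even \emph{positions}, deleting one letter (equivalently, conditioning on the position of the largest value) renumbers all later positions and flips their parities, so it does not split $L$ additively and yields no clean recursion $B_{n-1}\to B_n$. The natural fix is to pass two positions at a time, relating $B_n$ to $B_{n-2}$ by inserting the two largest values into adjacent slots, which preserves the parities of the surviving positions and contributes a controllable Gaussian-type factor for the newly created odd inversions. One would then hope that the resulting transfer operator sends palindromic unimodal polynomials to palindromic unimodal polynomials.

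The hard part, indeed the crux of the whole problem, is controlling the coefficients near the centre $d/2$. The transfer operator above mixes coefficients across several ranks, and nothing in its form prevents it from reproducing, at larger $n$, the very central dip already visible at $n=3$; unimodality in type $A$ is not protected by any product factorization, unlike the situation in some other types where the odd-length generating function splits into palindromic unimodal factors and unimodality becomes automatic. A convincing proof would thus require sharp quantitative estimates, or an exact formula, for the central coefficients $b_{\lfloor d/2\rfloor-1}$ and $b_{\lfloor d/2\rfloor}$, showing that the middle values always catch up once $n \ge 4$. I expect this central control to be the decisive and most delicate step of the argument.
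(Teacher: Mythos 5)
There is a fundamental problem with your proposal: the statement you are trying to prove is false, and the paper in question is precisely a counterexample paper. No proof strategy --- real-rootedness, $\gamma$-nonnegativity, injections between rank sets, or a two-step transfer recursion --- can succeed, because $B_{11}(q)$ is not unimodal. Concretely, for $n=11$ one has $d = L(w_0) = 36$, and the coefficients around the centre are $b_{17} = b_{19} = 46229504$ while $b_{18} = 45695886$ (see the red entry in \Cref{table}): the central coefficient dips below its two neighbours, exactly the pattern you flagged at $n=3$. Since palindromicity (your correct deduction from \Cref{prop propriété type A}(3)) forces $b_{17} = b_{19}$, this single computed value settles the matter. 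The paper's ``proof'' is this explicit computation, carried out in C++ for all $n \le 17$; the conjecture fails exactly at $n = 3$ and $n = 11$ in that range, and the authors expect infinitely many failures.

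It is worth noting that your analysis was diagnostically on target even though the conclusion you aimed for is wrong: you correctly identified that nothing in the structure of $L$ protects the central coefficients, that the $n=3$ dip is not an isolated pathology that induction can quarantine, and that the absence of a product factorization (unlike types where the odd-length generating function splits into palindromic unimodal factors) leaves the centre genuinely unconstrained. The methodological lesson is that when every proof avenue funnels into uncontrolled behaviour at a single critical location, and a small counterexample already exists just below the hypothesis threshold, the efficient next step is large-scale computation rather than sharper estimates --- here the verification up to $n=10$ by Brenti--Carnevale gave false comfort, and the failure reappears at the very next case $n=11$.
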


\medskip

\begin{example}
\begin{itemize}
    \item[] 
    \item ~ $B_1(q) = 1 + q$.
    \item ~ $B_2(q) = 1 + 4q+ q^2$.
    \item ~ $B_3(q) = 1 + 8q + 6q^2 + 8q^3 + q^4$.
    \item ~ $B_4(q) = 1 + 12q + 23q^2 + 48q^3 + 23q^4 + 12q^5 + 1q^6$.
    \item ~ $B_5(q) = 1 + 16q + 59q^2 + 137q^3 + 147q^4 + 147q^5 + 137q^6 + 59q^7 + 16q^8 +q^{9}$.
    \item ~ $B_6(q) = 1 + 20q + 113q^2 + 300q^3 + 631q^4 + 832q^5 + 1246q^6 + 832q^7 + 631q^8 +300q^{9}+113q^{10}+20q^{11}+q^{12}$.
\end{itemize}
\end{example}

\medskip

F. Brenti and A. Carnevale further extended the concept of inversion sets, by defining for any $k,n \in \mathbb{N}$ and any residue $h ~\text{mod}~ k$, the following set (see \cite[Formula (6.1)]{brenti2021odd})
\begin{align*}
    \mathcal{I}_{k,h}(w):= \{(i,j) \in \mathcal{I}(w)~|~j-i = h~\text{mod}~k\}.
\end{align*}
along with
\begin{align*}
    \mathfrak{inv}_{k,h}(w) := |\mathcal{I}_{k,h}(w)|. 
\end{align*}

\medskip

\begin{conjecture}{\cite[Conjecture 6.3]{brenti2021odd}}\label{conj2}
For any $n \in \mathbb{N}$ and any $k \geq 3$, the following polynomial is unimodal 
$$
B_{n,k,1}(q) := \sum\limits_{w \in W(A_n)}q^{\mathfrak{inv}_{k,1}(w)}.
$$
\end{conjecture}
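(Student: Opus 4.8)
The plan is to prove the statement in two stages: first show that $B_{n,k,1}(q)$ is palindromic, then upgrade palindromy to unimodality. For the symmetry, I would mimic the argument behind Proposition~\ref{prop propriété type A}(3) using the involution $w \mapsto ww_0$, where $w_0(i) = n+2-i$ reverses positions. For $i<j$ one has $(ww_0)(i) = w(n+2-i)$, so $(i,j)$ is an inversion of $ww_0$ exactly when $(n+2-j,\,n+2-i)$ is a \emph{non}-inversion of $w$; crucially the position-gap is preserved, since $(n+2-i)-(n+2-j)=j-i$. Writing $P_1 := \{(i,j)\mid 1\le i<j\le n+1,\ j-i\equiv 1 \bmod k\}$, the map $(i,j)\mapsto(n+2-j,\,n+2-i)$ is an involution of $P_1$ carrying $\mathcal{I}(ww_0)\cap P_1$ onto the complement of $\mathcal{I}(w)\cap P_1$. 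Hence $\mathfrak{inv}_{k,1}(ww_0)=|P_1|-\mathfrak{inv}_{k,1}(w)$, and since $w_0$ inverts every pair we get $\mathfrak{inv}_{k,1}(w_0)=|P_1|$, so $\mathfrak{inv}_{k,1}(ww_0)=\mathfrak{inv}_{k,1}(w_0)-\mathfrak{inv}_{k,1}(w)$. This identifies the coefficients of $q^m$ and $q^{|P_1|-m}$, proving $B_{n,k,1}$ is palindromic about $|P_1|/2$.

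Since palindromy alone does not give unimodality, for the second stage I would aim for the stronger property of $\gamma$-nonnegativity, which does imply unimodality for a palindromic polynomial with nonnegative coefficients. The guiding case is $k>n$: then $j-i\equiv 1 \bmod k$ forces $j-i=1$, so $\mathfrak{inv}_{k,1}(w)$ is the number of descents of $w$ and $B_{n,k,1}(q)$ equals the Eulerian polynomial $A_{n+1}(q)$, which is $\gamma$-nonnegative (Foata--Sch\"utzenberger) and even real-rooted. The plan is to generalize the associated \emph{valley-hopping} group action: partition $W(A_n)$ into orbits of an involution built from local moves that toggle a single counted inversion, so that each orbit of $t$ independent toggles contributes a term $q^{i}(1+q)^{t}$ with $i\ge 0$ and $2i+t=|P_1|$, summing to a manifestly $\gamma$-nonnegative expression. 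Concretely, for each $w$ one would identify the positions where a neighbour-type transposition changes $\mathfrak{inv}_{k,1}$ by exactly $\pm1$, group permutations sharing their ``fixed'' part, and sum over the corresponding $2^{t}$ toggles.

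The difficulty is entirely in the second stage, and it stems from the fact that the congruence $j-i\equiv 1 \bmod k$ \emph{couples} the two positions of an inversion. In particular $\mathfrak{inv}_{k,1}$ does not factor through the Lehmer code: deleting an interior position shifts later positions and alters their residues mod $k$, so the product decomposition $\sum_w q^{\mathrm{inv}(w)}=\prod_m(1+q+\cdots+q^{m})$ has no evident analogue here, blocking the ``product of symmetric unimodal factors'' route. For the $\gamma$-nonnegativity route the same coupling means a transposition at positions $(i,i+1)$ can simultaneously affect inversions at gaps $1,\,1+k,\,1+2k,\dots$, so the main technical task is to design local moves that provably change $\mathfrak{inv}_{k,1}$ by exactly one and whose toggles are independent enough to realise the $(1+q)$-factorisation; this is precisely the point where the argument is most delicate. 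Should such an action resist construction for some $k\ge 3$, the fallback is to seek a residue-respecting bijection expressing $B_{n,k,1}$ as a centre-aligned sum of symmetric unimodal polynomials, invoking the classical fact that products, and sums sharing a common centre of symmetry, of palindromic unimodal polynomials with nonnegative coefficients remain palindromic and unimodal; verifying that the pieces share the centre $|P_1|/2$ would then be the crux.
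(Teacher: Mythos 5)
The statement you are addressing is a \emph{conjecture} (\cite[Conjecture 6.3]{brenti2021odd}): the paper does not prove it and offers no argument for it, only computational evidence (the table of coefficients of $B_{n,k,1}(q)$ for $k=3$, and verification in low rank), together with a proof of the easy structural properties in the generalized setting of $\mathcal{L}_{k,h}$. So there is no paper proof to compare yours against, and your proposal, by its own account, is a program rather than a proof. Your first stage is correct and is essentially the same symmetry argument the paper proves for $\mathcal{L}_{k,h}$ (point (3) of its proposition, via $N(ww_0)=T\setminus N(w)$ and height-preservation): the map $(i,j)\mapsto(n+2-j,\,n+2-i)$ preserves the gap $j-i$, so $\mathfrak{inv}_{k,1}(ww_0)=\mathfrak{inv}_{k,1}(w_0)-\mathfrak{inv}_{k,1}(w)$ and $B_{n,k,1}(q)$ is palindromic. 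But palindromy is the cheap half; unimodality is the entire content of the conjecture, and your second stage (a valley-hopping action realizing a $\gamma$-nonnegative expansion) is left unconstructed, with the key difficulty --- local moves that change $\mathfrak{inv}_{k,1}$ by exactly $\pm 1$ and toggle independently --- explicitly deferred.

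There is moreover a structural reason to doubt the second stage as described: your mechanism makes no use of the hypothesis $k\geq 3$, yet the case $k=2$ is exactly the statistic $L=\mathfrak{inv}_{2,1}$ for which this very paper exhibits failures of unimodality at $n=3$ and $n=11$ (and the authors expect infinitely many failures). The $k=2$ polynomial $B_n(q)$ is just as palindromic as $B_{n,k,1}(q)$ for $k\geq 3$, so palindromy plus a generic toggling heuristic cannot suffice; any valley-hopping or $\gamma$-positivity argument must break precisely at $k=2$, and your proposal identifies no feature of the congruence $j-i\equiv 1 \bmod k$ with $k\geq 3$ that would make the toggles independent there but not for $k=2$. (The same warning applies to your fallback of writing $B_{n,k,1}$ as a centre-aligned sum of symmetric unimodal pieces: such a decomposition cannot exist uniformly in $k$, since it would prove the false $k=2$ case.) The coupling obstruction you name --- that $\mathfrak{inv}_{k,1}$ does not factor through the Lehmer code, so the product formula for $\mathrm{inv}$ has no analogue --- is real, but the deeper missing idea is a mechanism that distinguishes $k\geq 3$; without it the plan does not amount to a proof strategy for a statement that, as of this paper, remains open.
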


\bigskip

It is straightforward from the definition that $\mathfrak{inv}_{2,1}(w) = L(w)$. Moreover, it is also easy to see for $k \geq n-2$ that $B_{n,k,1}(q)$ is equal to the Eulerian polynomial, that is
$$
B_{n,k,1}(q) = \sum\limits_{w \in W(A_n)}q^{|D_R(w)|}.
$$

\bigskip

\begin{question}
    We may wonder if there is an analogue for $\mathfrak{inv}_{k,h}$ in terms of a map resembling to (\ref{odd-length type A}) ?
\end{question}

\bigskip

\newpage

\section{Conjecture \ref{conj 1}}

In \cite{brenti2021odd}, the authors have verified the conjecture up to $n=10$, however, it fails at the very next step. We give below some explicit data.

\bigskip

\begin{figure}[h!]
\[
\tiny
\begin{array}{r|r|r|r|r|r|r|r|r|r|r|r|r|r|r}
d & N_{1} & N_{2} & N_{3} & N_{4} & N_{5} & N_{6} & N_{7} & N_{8} & N_{9} & N_{10} & N_{11} & N_{12} & N_{13} & N_{14} \\
\hline
0 & \textcolor{blue}{1} & 1 & 1 & 1 & 1 & 1 & 1 & 1 & 1 & 1 & 1 & 1 & 1 & 1\\
1 & \textcolor{blue}{1} & \textcolor{blue}{4} & 8 & 12 & 16 & 20 & 24 & 28 & 32 & 36 & 40 & 44 & 48 & 52\\
2 & - & 1 & {\color{red}6} & 23 & 59 & 113 & 183 & 269 & 371 & 489 & 623 & 773 & 939& 1121 \\
3 & - & - & 8 & \textcolor{blue}{48} & 137 & 300 & 620 & 1184 & 2056 & 3296 & 4968 & 7136 & 9864 & 13216\\
4 & - & - & 1 & 23 & \textcolor{blue}{147} & 631 & 1878 & 4201 & 8155 & 14823 & 25579 & 41995 & 65887 & 99335\\
5 & - & - & - & 12 & \textcolor{blue}{147} & 832 & 2956 & 8524 & 22273 & 50940 & 102584 & 189532 & 330648 & 551604\\
6 & - & - & - & 1 & 137 & \textcolor{blue}{1246} & 5481 & 18548 & 54124 & 134230 & 302591 & 644400 & 1287220 & 2411214 \\
7 & - & - & - & - & 59 & 832 & 5616 & 28244 & 105147 & 310324 & 817040 & 1950708 & 4241497 & 8648368\\
8 & - & - & - & - & 16 & 631 & \textcolor{blue}{6802} & 42070 & 173888 & 585142 & 1779709 & 4797444 & 11733016 & 26837670 \\
9 & - & - & - & - & 1 & 300 & 5616 & 48420 & 254631 & 1060652 & 3727112 & 11009044 & 29364967 &73107128\\
10 & - & - & - & - & - & 113 & 5481 & \textcolor{blue}{59902} & 350506 & 1626663 & 6383306 & 21371677 & 65173557 & 180535647\\
11 & - & - & - & - & - & 20 & 2956 & 48420 & 402868 & 2440224 & 10999144 & 40427016 & 133646843 & 397580820\\
12 & - & - & - & - & - & 1 & 1878 & 42070 & \textcolor{blue}{440348} & 3116617 & 15876597 & 67176502 & 252077108 & 825351072\\
13 & - & - & - & - & - & - & 620 & 28244 & \textcolor{blue}{440348} & 3940532 & 23094400 & 109067416 & 444000080 & 1563580256\\
14 & - & - & - & - & - & - & 183 & 18548 & 402868 & 4285235 & 28960385 & 158426584 & 724445818 & 2816736057\\
15 & - & - & - & - & - & - & 24 & 8524 & 350506 & \textcolor{blue}{4778392} & 37367608 & 227994816 & 1126089020 & 1563580256\\
16 & - & - & - & - & - & - & 1 & 4201 & 254631 & 4285235 & 40981666 & 296523080 & 1648976512 & 2816736057 \\
17 & - & - & - & - & - & - & - & 1184 & 173888 & 3940532 & 46229504 & 380494144 & 2300776313 & 4728979076 \\
18 & - & - & - & - & - & - & - & 269 & 105147 & 3116617 & {\color{red}45695886} & 444961297 & 3049166295 & 7669867339\\
19 & - & - & - & - & - & - & - & 28 & 54124 & 2440224 & 46229504 & 515222884 & 3875492392 & 11628091516\\
20 & - & - & - & - & - & - & - & 1 & 22273 & 1626663 & 40981666 & 544667447 & 4704025183 & 17190758276 \\
21 & - & - & - & - & - & - & - & - & 8155 & 1060652 & 37367608 & \textcolor{blue}{577072920} & 5523283878 &23892874904\\
22 & - & - & - & - & - & - & - & - & 2056 & 585142 & 28960385 & 544667447 & 6156913063 & 32526492523\\
23 & - & - & - & - & - & - & - & - & 371 & 310324 & 23094400 & 515222884 & 6647055284 & 41831536516\\
24 & - & - & - & - & - & - & - & - & 32 & 134230 & 15876597 & 444961297 & \textcolor{blue}{6890990167}& 52890180033 \\
25 & - & - & - & - & - & - & - & - & 1 & 50940 & 10999144 & 380494144 & \textcolor{blue}{6890990167} &63197491888\\
26 & - & - & - & - & - & - & - & - & - & 14823 & 6383306 & 296523080 & 6647055284 & 74391276942\\
27 & - & - & - & - & - & - & - & - & - & 3296 & 3727112 & 227994816 & 6156913063 & 82890039732\\
28 & - & - & - & - & - & - & - & - & - & 489 & 1779709 & 158426584 & 5523283878 &91181445876\\
29 & - & - & - & - & - & - & - & - & - & 36 & 817040 & 109067416 & 4704025183 & 94927984924\\
30 & - & - & - & - & - & - & - & - & - & 1 & 302591 & 67176502 & 3875492392 & \textcolor{blue}{97989421788}\\
31 & - & - & - & - & - & - & - & - & - & - & 102584 & 40427016 & 3049166295 & 94927984924\\
32 & - & - & - & - & - & - & - & - & - & - & 25579 & 21371677 & 2300776313 & 91181445876 \\
33 & - & - & - & - & - & - & - & - & - & - & 4968 & 11009044 & 1648976512 & 82890039732 \\
34 & - & - & - & - & - & - & - & - & - & - & 623 & 4797444 & 1126089020 & 74391276942 \\
35 & - & - & - & - & - & - & - & - & - & - & 40 & 1950708 & 724445818 & 63197491888\\
36 & - & - & - & - & - & - & - & - & - & - & 1 & 644400 & 444000080 & 52890180033\\
37 & - & - & - & - & - & - & - & - & - & - & - & 189532 & 252077108 & 41831536516\\
38 & - & - & - & - & - & - & - & - & - & - & - & 41995 &  133646843 & 32526492523\\
39 & - & - & - & - & - & - & - & - & - & - & - & 7136 & 65173557  & 23892874904\\
40 & - & - & - & - & - & - & - & - & - & - & - & 773 & 29364967 & 17190758276\\
41 & - & - & - & - & - & - & - & - & - & - & - & 44 & 11733016 &11628091516\\
42 & - & - & - & - & - & - & - & - & - & - & - & 1 & 4241497 &7669867339\\
43 & - & - & - & - & - & - & - & - & - & - & - & - & 1287220 & 4728979076\\
44 & - & - & - & - & - & - & - & - & - & - & - & - & 330648 &2816736057\\
45 & - & - & - & - & - & - & - & - & - & - & - & - & 65887 & 1563580256\\
46 & - & - & - & - & - & - & - & - & - & - & - & - & 9864 & 825351072\\
47 & - & - & - & - & - & - & - & - & - & - & - & - & 939 &397580820\\
48 & - & - & - & - & - & - & - & - & - & - & - & - & 48 & 180535647\\
49 & - & - & - & - & - & - & - & - & - & - & - & - & 1 & 73107128\\
50 & - & - & - & - & - & - & - & - & - & - & - & - & - & 26837670\\
51 & - & - & - & - & - & - & - & - & - & - & - & - & - & 8648368\\
52 & - & - & - & - & - & - & - & - & - & - & - & - & - & 2411214\\
53 & - & - & - & - & - & - & - & - & - & - & - & - & - &551604\\
54 & - & - & - & - & - & - & - & - & - & - & - & - & - & 99335\\
55 & - & - & - & - & - & - & - & - & - & - & - & - & - &13216\\
56 & - & - & - & - & - & - & - & - & - & - & - & - & - &1121\\
57 & - & - & - & - & - & - & - & - & - & - & - & - & - &52\\
58 & - & - & - & - & - & - & - & - & - & - & - & - & - &1\\
\end{array}
\]
\caption{For fixed $n$, we denote by $N_n(d)$ the number of elements $w \in W(A_n)$ with $L(w) = d$. The blue indicates the maximal coefficients reached.}
\label{table}
\end{figure}

\textcolor{blue}{}

\begin{remark}
   It is unlikely that \Cref{conj 1} holds in general for $n \geq 12$. Computational experiments, using the software \textbf{C}++ along with \cite{calculco}, have verified this conjecture up to $n = 17$, revealing that the only cases where it fails are $n = 3$ and $n = 11$. 
   We might imagine that the next case where it fails is $n = 19$. Unfortunately, it is out of reach by computational experiments. We actually believe that there are infinitely many integers $n$ such that $B_n(q)$ is not unimodal. It would be very interesting to  characterize (if true) those numbers.
\end{remark}

\newpage

Below, we display the discrete curves corresponding to Figure \ref{table}.

\bigskip

\begin{figure}[h!]
    \centering
    \begin{minipage}{0.45\textwidth} 
        \centering
        \begin{tikzpicture}
        \begin{axis}
        \addplot table [x=l, y=N]{csv/res_2_5.csv};
        \end{axis}
        \end{tikzpicture}
        \caption{Case $n = 5$.}
    \end{minipage}
    \hfill
    \begin{minipage}{0.45\textwidth}
        \centering
        \begin{tikzpicture}
        \begin{axis}
        \addplot table [x=l, y=N]{csv/res_2_6.csv};
        \end{axis}
        \end{tikzpicture}
        \caption{Case $n = 6$.}
    \end{minipage}
\end{figure}


\begin{figure}[h!]
    \centering
    \begin{minipage}{0.45\textwidth}
        \centering
        \begin{tikzpicture}
        \begin{axis}
        \addplot table [x=l, y=N]{csv/res_2_7.csv};
        \end{axis}
        \end{tikzpicture}
        \caption{Case $n = 7$.}
    \end{minipage}
    \hfill
    \begin{minipage}{0.45\textwidth}
        \centering
        \begin{tikzpicture}
        \begin{axis}
        \addplot table [x=l, y=N]{csv/res_2_8.csv};
        \end{axis}
        \end{tikzpicture}
        \caption{Case $n = 8$.}
    \end{minipage}
\end{figure}


\begin{figure}[h!]
    \centering
    \begin{minipage}{0.45\textwidth}
        \centering
        \begin{tikzpicture}
        \begin{axis}
        \addplot table [x=l, y=N]{csv/res_2_9.csv};
        \end{axis}
        \end{tikzpicture}
        \caption{Case $n = 9$.}
    \end{minipage}
    \hfill
    \begin{minipage}{0.45\textwidth}
        \centering
        \begin{tikzpicture}
        \begin{axis}
        \addplot table [x=l, y=N]{csv/res_2_10.csv};
        \end{axis}
        \end{tikzpicture}
        \caption{Case $n = 10$.}
    \end{minipage}
\end{figure}

\newpage


\begin{figure}[h!]
    \centering
    \begin{minipage}{0.45\textwidth}
        \centering
        \begin{tikzpicture}
        \begin{axis}
        \addplot table [x=l, y=N]{csv/res_2_11.csv};
        \end{axis}
        \end{tikzpicture}
        \caption{Case $n = 11$, refuting \Cref{conj 1}.}
    \end{minipage}
    \hfill
    \begin{minipage}{0.45\textwidth}
        \centering
        \begin{tikzpicture}
        \begin{axis}
        \addplot table [x=l, y=N]{csv/res_2_12.csv};
        \end{axis}
        \end{tikzpicture}
        \caption{Case $n = 12$.}
    \end{minipage}
\end{figure}


\begin{figure}[h!]
    \centering
    \begin{minipage}{0.45\textwidth}
        \centering
        \begin{tikzpicture}
        \begin{axis}
        \addplot table [x=l, y=N]{csv/res_2_13.csv};
        \end{axis}
        \end{tikzpicture}
        \caption{Case $n = 13$.}
    \end{minipage}
    \hfill
    \begin{minipage}{0.45\textwidth}
        \centering
        \begin{tikzpicture}
        \begin{axis}
        \addplot table [x=l, y=N]{csv/res_2_14.csv};
        \end{axis}
        \end{tikzpicture}
        \caption{Case $n = 14$.}
    \end{minipage}
\end{figure}

\begin{figure}[h!]
    \centering
    \begin{minipage}{0.45\textwidth}
        \centering
        \begin{tikzpicture}
        \begin{axis}
        \addplot table [x=l, y=N]{csv/res_2_15.csv};
        \end{axis}
        \end{tikzpicture}
        \caption{Case $n = 15$.}
    \end{minipage}
    \hfill
    \begin{minipage}{0.45\textwidth}
        \centering
        \begin{tikzpicture}
        \begin{axis}
        \addplot table [x=l, y=N]{csv/res_2_16.csv};
        \end{axis}
        \end{tikzpicture}
        \caption{Case $n = 16$.}
    \end{minipage}
\end{figure}

\begin{figure}[h!]
    \centering
    \begin{tikzpicture}
    \begin{axis}
    \addplot table [x=l, y=N]{csv/res_2_17.csv};
    \end{axis}
    \end{tikzpicture}
    \caption{Case $n = 17$.}
\end{figure}
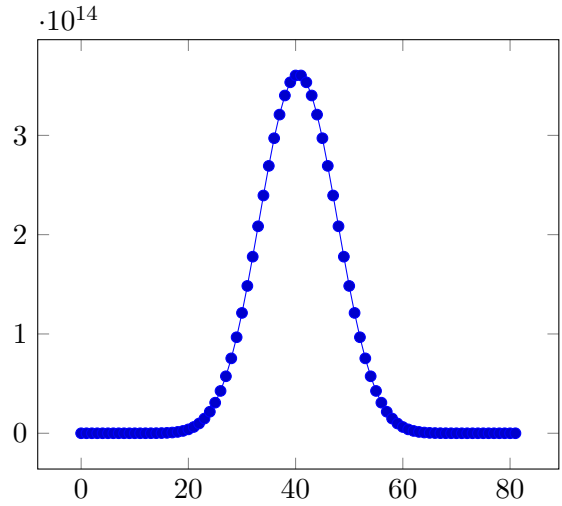

\bigskip

\newpage

\section{ \Cref{conj2}}
 \Cref{conj2} seems to hold, as it is illustrated in the following table, where for fixed $n$ and $k \geq 3$, we denote by $N_n(d)$ the coefficient of $q^d$ in $B_{n,k,1}(q)$.
\begin{figure}[h!]
\[
\begin{array}{r|r|r|r|r|r|r|r|r|r|r|r|r|}
d & N_{1} & N_{2} & N_{3} & N_{4} & N_{5} & N_{6} & N_{7} & N_{8} & N_{9} & N_{10} & N_{11} & N_{12} \\
\hline
0 & \textcolor{blue}{1} & 1 & 1 & 1 & 1 & 1 & 1 & 1 & 1 & 1 & 1 & 1 \\
1 & \textcolor{blue}{1} & \textcolor{blue}{4} & \textcolor{blue}{11} & 22 & 35 & 48 & 61 & 74 & 87 & 100 & 113 & 126 \\
2 & - & 1 & \textcolor{blue}{11} & \textcolor{blue}{37} & 99 & 249 & 573 & 1130 & 1904 & 2855 & 3975 & 5264 \\
3 & - & - & 1 & \textcolor{blue}{37} & \textcolor{blue}{225} & 845 & 2361 & 5295 & 10548 & 20611 & 39685 & 71979 \\
4 & - & - & - & 22 & \textcolor{blue}{225} & \textcolor{blue}{1377} & 5211 & 15649 & 43536 & 114529 & 266099 & 536350 \\
5 & - & - & - & 1 & 99 & \textcolor{blue}{1377} & 7658 & 32338 & 117477 & 370193 & 990566 & 2353010 \\
6 & - & - & - & - & 35 & 845 & \textcolor{blue}{8590} & 55130 & 260742 & 968743 & 2968490 & 8258521 \\
7 & - & - & - & - & 1 & 249 & 7658 & \textcolor{blue}{71823} & 442552 & 1964091 & 7107245 & 23681728 \\
8 & - & - & - & - & - & 48 & 5211 & \textcolor{blue}{71823} & 601209 & 3270704 & 14396018 & 57073451 \\
9 & - & - & - & - & - & 1 & 2361 & 55130 & \textcolor{blue}{672688} & 4631351 & 25132683 & 118210113 \\
10 & - & - & - & - & - & - & 573 & 32338 & 601209 & 5624480 & 38809778 & 216769276 \\
11 & - & - & - & - & - & - & 61 & 15649 & 442552 & \textcolor{blue}{5981484} & 52682302 & 351093447 \\
12 & - & - & - & - & - & - & 1 & 5295 & 260742 & 5624480 & 63316023 & 506479874 \\
13 & - & - & - & - & - & - & - & 1130 & 117477 & 4631351 & \textcolor{blue}{67575644} & 657393039 \\
14 & - & - & - & - & - & - & - & 74 & 43536 & 3270704 & 63316023 & 767681103 \\
15 & - & - & - & - & - & - & - & 1 & 10548 & 1964091 & 52682302 & \textcolor{blue}{807806236} \\
16 & - & - & - & - & - & - & - & - & 1904 & 968743 & 38809778 & 767681103 \\
17 & - & - & - & - & - & - & - & - & 87 & 370193 & 25132683 & 657393039 \\
18 & - & - & - & - & - & - & - & - & 1 & 114529 & 14396018 & 506479874 \\
19 & - & - & - & - & - & - & - & - & - & 20611 & 7107245 & 351093447 \\
20 & - & - & - & - & - & - & - & - & - & 2855 & 2968490 & 216769276 \\
21 & - & - & - & - & - & - & - & - & - & 100 & 990566 & 118210113 \\
22 & - & - & - & - & - & - & - & - & - & 1 & 266099 & 57073451 \\
23 & - & - & - & - & - & - & - & - & - & - & 39685 & 23681728 \\
24 & - & - & - & - & - & - & - & - & - & - & 3975 & 8258521 \\
25 & - & - & - & - & - & - & - & - & - & - & 113 & 2353010 \\
26 & - & - & - & - & - & - & - & - & - & - & 1 & 536350 \\
27 & - & - & - & - & - & - & - & - & - & - & - & 71979 \\
28 & - & - & - & - & - & - & - & - & - & - & - & 5264 \\
29 & - & - & - & - & - & - & - & - & - & - & - & 126 \\
30 & - & - & - & - & - & - & - & - & - & - & - & 1 \\
\end{array}
\]
\caption{\Cref{conj2} for $k = 3$.}
\label{table2}
\end{figure}

\section{Generalisation of the odd-length}

So far, the odd-length is only defined in type $A_n$. In this section we extend its definition to any (finite) Weyl group $(W,S)$. For convenience, we will use left-inversion sets rather than right-inversion sets. 
We also set once for all $N(w) := N_L(w)$ for any $w \in W$ and 
\begin{align}\label{def root inversion set}
    \Phi(w) := \{\alpha \in \Phi^+~|~s_{\alpha} \in N(w)\} = \{\alpha \in \Phi^+~|~w^{-1}(\alpha) \in \Phi^{-}\} = \Phi^+ \cap w(\Phi^{-}).
\end{align}

We set $|\alpha| := \alpha$ if $\alpha \in \Phi^+$ and $|\alpha| := -\alpha$ if $\alpha \in \Phi^-$. The following map is a bijection
$$
\begin{array}{ccc}
    N(w) & \longrightarrow & \Phi(w)\\
      t = wsw^{-1} & \longmapsto & \alpha_t :=|w(\alpha_s)|.
\end{array}
$$

It is well known that for any $x,y \in W$
\begin{align}\label{formula product inversion set N}
    N(xy) = N(x) ~\Delta~ xN(y)x^{-1},
\end{align}
where $\Delta$ denotes here the symmetric difference.  \Cref{formula product inversion set N} has a root analogue, given by
\begin{align}\label{formula product inversion set}
    \Phi(xy) = \Phi(x) ~\Delta~ |x(\Phi(y))|.
\end{align}

\medskip

We define the map $\mathcal{L}_{k,h}$ for any $k \in \mathbb{N}^*$ and any $h \in \mathbb{Z}$ as follows

\begin{equation}\label{odd-length general}
\begin{array}{ccccc}
      \mathcal{L}_{k,h} & : & W & \longrightarrow & \mathbb{N}  \\
                    &   & w & \longmapsto     & |\{\alpha \in \Phi(w)~|~\mathrm{ht}(\alpha) = h~\text{mod}~k\}|.
\end{array}
\end{equation}

It is also clear that 
\begin{align}\label{other des L modular}
    \mathcal{L}_{k,h}(w) = |\{t\in N(w)~|~\mathrm{ht}(\alpha_t) = h~\text{mod}~k\}|.
\end{align}

\medskip

We introduce now, on any Weyl group $W$, the analogue of $B_{n,k,1}$ but for the statistic $\mathcal{L}_{k,h}$
\begin{equation}\label{poly brenti gene}
    \mathcal{B}_{k,h}(q) := \sum\limits_{w \in W}q^{\mathcal{L}_{k,h}(w)}.
\end{equation}
\medskip

The following lemma is well known, but we give its proof for completeness.

\begin{lemma}\label{lemma inv set}
    Let $w \in W$. We have
    \begin{itemize}
        \item[(1)] $N(ww_0) = T \setminus N(w)$.
        \item[(2)] $N(w_0w) = T \setminus w_0N(w)w_0$.
    \end{itemize}
\end{lemma}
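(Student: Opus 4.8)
The plan is to deduce both identities from the cocycle formula \eqref{formula product inversion set N}, namely $N(xy) = N(x)\,\Delta\, xN(y)x^{-1}$, together with the single auxiliary fact that $N(w_0) = T$. To establish the latter, observe that every reflection $t \in T$ is distinct from $e$, and since $w_0$ is the \emph{unique} longest element we have $\ell(tw_0) < \ell(w_0)$, whence $t \in N(w_0)$; combined with the trivial inclusion $N(w_0) \subseteq T$ this gives $N(w_0) = T$. I would also record the elementary group-theoretic fact that $T$ is stable under conjugation, i.e. $xTx^{-1} = T$ for every $x \in W$, which is immediate from the definition $T = \{wsw^{-1}\mid s\in S,\ w\in W\}$.

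For part (1), I would apply \eqref{formula product inversion set N} with $x = w$ and $y = w_0$ to get $N(ww_0) = N(w)\,\Delta\, wN(w_0)w^{-1} = N(w)\,\Delta\, wTw^{-1} = N(w)\,\Delta\, T$. Since $N(w) \subseteq T$, the symmetric difference $N(w)\,\Delta\, T$ collapses to $T \setminus N(w)$, which is exactly the claim.

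For part (2), I would apply \eqref{formula product inversion set N} with $x = w_0$ and $y = w$, using $w_0^{-1} = w_0$, to get $N(w_0 w) = N(w_0)\,\Delta\, w_0 N(w) w_0 = T\,\Delta\, w_0 N(w) w_0$. As conjugation preserves $T$ we have $w_0 N(w) w_0 \subseteq T$, so this again collapses to $T \setminus w_0 N(w) w_0$.

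I do not anticipate a genuine obstacle: once $N(w_0) = T$ is in hand, each part is a one-line manipulation of symmetric differences, the only point requiring minor care being the identity $A\,\Delta\, T = T \setminus A$, valid precisely because $A = N(w)$ (resp. $A = w_0 N(w) w_0$) is a subset of $T$. Should one wish to avoid \eqref{formula product inversion set N} altogether, an alternative is to argue directly from lengths: using $\ell(xw_0) = \ell(w_0) - \ell(x)$ for all $x \in W$ and the fact that $w_0 t w_0 \in T$ for $t \in T$, one rewrites $\ell(tww_0)$ and $\ell(tw_0w)$ and compares them with $\ell(ww_0)$ and $\ell(w_0w)$ to read off membership in the respective inversion sets; this yields the same conclusions but is slightly longer.
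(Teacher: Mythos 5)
Your proof is correct and follows essentially the same route as the paper: both apply the cocycle formula \eqref{formula product inversion set N} with $(x,y)=(w,w_0)$ and $(x,y)=(w_0,w)$, use $N(w_0)=T$ and the conjugation-invariance of $T$, and collapse the symmetric difference $A\,\Delta\,T$ to $T\setminus A$ because $A\subseteq T$. The only (harmless) additions on your side are an explicit justification of $N(w_0)=T$, which the paper takes as known, and a sketched length-based alternative.
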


\begin{proof}
By \Cref{formula product inversion set N} we have $N(ww_0) = N(w) ~\Delta~ wN(w_0)w^{-1}$. However, $N(w_0) = T$ and then $wN(w_0)w^{-1} = T$. Therefore, $N(ww_0) = N(w) ~\Delta~ T= \left(N(w) \cup T\right) \setminus \left(N(w) \cap T\right) = T \setminus N(w)$, where the last equality comes from the fact that $N(w) \subset T$. The second point is straightforward: $N(w_0w) = N(w_0) ~\Delta~ w_0N(w)w_0^{-1} = T ~\Delta ~w_0N(w)w_0 = T \setminus w_0N(w)w_0$, since $w_0N(w)w_0 \subset T$.
\end{proof}

\medskip
We have the following properties on $\mathcal{L}_{k,h}$, similar to those in  \Cref{prop propriété type A}.
\medskip

\begin{proposition}
The map $\mathcal{L}_{k,h}$ satisfies the following properties
    \begin{itemize}
        \item[(1)] $\mathcal{L}_{k,h}(e) = 0$,
        \item[(2)] $\mathcal{L}_{k,1}(s) = 1$ for any $s \in S$,
        \item[(3)] $\mathcal{L}_{k,h}(w_0w) = \mathcal{L}_{k,h}(ww_0) = \mathcal{L}_{k,h}(w_0)-\mathcal{L}_{k,h}(w)$ for any $w \in W$,
        \item[(4)] $w_0$ is the unique element on which $\mathcal{L}_{k,h}$ reaches its maximum $\mathcal{L}_{k,h}(w_0)$.
    \end{itemize}
\end{proposition}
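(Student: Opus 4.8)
The plan is to reduce everything to the reformulation \eqref{other des L modular} of $\mathcal{L}_{k,h}$ in terms of the inversion set $N(w)$, together with the two identities of \Cref{lemma inv set}. Throughout I write $A_h := \{t \in T \mid \mathrm{ht}(\alpha_t) \equiv h \pmod{k}\}$, so that by \eqref{other des L modular} one has $\mathcal{L}_{k,h}(w) = |N(w) \cap A_h|$; in particular, since $N(w_0) = T$, this gives $\mathcal{L}_{k,h}(w_0) = |T \cap A_h| = |A_h|$.

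Properties (1) and (2) are immediate from the definitions. For (1), $\Phi(e) = \Phi^+ \cap \Phi^- = \emptyset$ by \eqref{def root inversion set}, so $\mathcal{L}_{k,h}(e) = 0$. For (2), a simple reflection $s$ has $N(s) = \{s\}$, hence $\Phi(s) = \{\alpha_s\}$ with $\mathrm{ht}(\alpha_s) = 1$; as $1 \equiv 1 \pmod{k}$, this yields $\mathcal{L}_{k,1}(s) = 1$.

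For (3), I first treat $ww_0$. By \Cref{lemma inv set}(1), $N(ww_0) = T \setminus N(w)$, so intersecting with $A_h$ gives $\mathcal{L}_{k,h}(ww_0) = |A_h| - |A_h \cap N(w)| = \mathcal{L}_{k,h}(w_0) - \mathcal{L}_{k,h}(w)$. For $w_0 w$ I invoke \Cref{lemma inv set}(2), $N(w_0 w) = T \setminus w_0 N(w) w_0$, which forces me to control the conjugation involution $\psi : t \mapsto w_0 t w_0$ of $T$. The key observation — and the one genuinely new ingredient — is that $\psi$ \emph{preserves the height statistic}: writing $t = s_\beta$ with $\beta \in \Phi^+$, one has $\psi(t) = s_{w_0(\beta)}$, whose associated positive root is $|w_0(\beta)| = -w_0(\beta)$ (because $w_0(\Phi^+) = \Phi^-$), and $\mathrm{ht}(-w_0(\beta)) = -\mathrm{ht}(w_0(\beta)) = \mathrm{ht}(\beta) = \mathrm{ht}(\alpha_t)$. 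Hence $\psi(A_h) = A_h$, and since $\psi$ is a bijection,
\[
|A_h \cap w_0 N(w) w_0| = |\psi(A_h) \cap \psi(N(w))| = |\psi(A_h \cap N(w))| = |A_h \cap N(w)| = \mathcal{L}_{k,h}(w).
\]
Intersecting $N(w_0 w) = T \setminus \psi(N(w))$ with $A_h$ then yields $\mathcal{L}_{k,h}(w_0 w) = |A_h| - \mathcal{L}_{k,h}(w) = \mathcal{L}_{k,h}(w_0) - \mathcal{L}_{k,h}(w)$.

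Finally, for (4), the inequality $\mathcal{L}_{k,h}(w) = |N(w) \cap A_h| \le |A_h| = \mathcal{L}_{k,h}(w_0)$ shows that the maximum is attained at $w_0$, with equality precisely when $A_h \subseteq N(w)$. The main obstacle is the \emph{uniqueness}: one must deduce $w = w_0$ from $A_h \subseteq N(w)$. Here I use that every simple reflection $s_i$ satisfies $\mathrm{ht}(\alpha_{s_i}) = 1$, so that $S \subseteq A_h$ as soon as $h \equiv 1 \pmod{k}$; then $A_h \subseteq N(w)$ gives $S \subseteq N(w)$, i.e. $D_L(w) = S$, and $w_0$ is the only element with full left descent set. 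I expect this final step to be the crux, and I would flag that it genuinely relies on $h \equiv 1 \pmod{k}$: for a residue $h$ with no simple root counted by $A_h$, removing such a simple root from $\Phi^+$ produces a second maximiser $s_i w_0 \ne w_0$, so the uniqueness in (4) should be read under the hypothesis $h \equiv 1 \pmod{k}$.
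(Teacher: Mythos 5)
Your proof is correct, and for points (1)--(3) it follows essentially the paper's route: the paper likewise handles $ww_0$ directly from $N(ww_0)=T\setminus N(w)$, and for $w_0w$ it works on the root side with $\Phi(w_0w)=\Phi(w_0)\setminus\bigl(-w_0(\Phi(w))\bigr)$ together with $\mathrm{ht}(w_0(\beta))=-\mathrm{ht}(\beta)$ and an explicit bijection $\gamma\mapsto -w_0(\gamma)$ --- which is exactly the height-preservation of your involution $\psi\colon t\mapsto w_0tw_0$ transported through the dictionary $t\mapsto\alpha_t$. The genuine divergence is point (4). The paper's entire proof of (4) is the sentence ``Point (4) is a direct consequence of (3)'', which only delivers maximality (since $\mathcal{L}_{k,h}(ww_0)\ge 0$ gives $\mathcal{L}_{k,h}(w)\le\mathcal{L}_{k,h}(w_0)$), not uniqueness. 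Your supplementary argument --- equality forces $A_h\subseteq N(w)$, hence $S\subseteq N(w)$ when $h\equiv 1\pmod k$, hence $w=w_0$ --- is what actually closes the uniqueness claim, and your caveat is well taken: if $h\not\equiv 1\pmod k$ then $\mathcal{L}_{k,h}(s)=0$ for any simple reflection $s$, so by (3) $\mathcal{L}_{k,h}(w_0s)=\mathcal{L}_{k,h}(w_0)$ with $w_0s\ne w_0$, and uniqueness fails. So (4) as stated requires the hypothesis $h\equiv 1\pmod k$ (or more generally that $\mathcal{L}_{k,h}$ vanishes only at $e$); your write-up supplies both the step the paper omits and the hypothesis under which it is true.
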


\begin{proof}
    Points (1) and (2) are obvious. Let $w \in W$. We first show that $\mathcal{L}_{k,h}(w_0w) = \mathcal{L}_{k,h}(w_0)-\mathcal{L}_{k,h}(w)$. By \Cref{formula product inversion set} we have
\begin{align*}
    \Phi(w_0w) & = \Phi(w_0) ~\Delta ~ |w_0(\Phi(w))| \\
               & = \Phi(w_0) ~\Delta ~ -w_0(\Phi(w)) \quad \text{because} \quad w_0(\Phi^+) = \Phi^- \\
               & = \Phi(w_0) \setminus -w_0(\Phi(w)) \quad \text{because} \quad -w_0(\Phi(w)) \subset \Phi^+ = \Phi(w_0).
\end{align*}

Therefore, it follows that
\begin{align*}
    \mathcal{L}_{k,h}(w_0w) & = |\{ \alpha \in \Phi(w_0w)~|~ \mathrm{ht}(\alpha) = h~\text{mod}~k\}| \\
    & = |\{ \alpha \in  \Phi(w_0) \setminus -w_0(\Phi(w))~|~ \mathrm{ht}(\alpha) = h~\text{mod}~k\}| \\
     & = |\{ \alpha \in  \Phi(w_0) ~|~ \mathrm{ht}(\alpha) = h~\text{mod}~k\}| - |\{ \alpha \in -w_0(\Phi(w)) ~|~ \mathrm{ht}(\alpha) = h~\text{mod}~k\}| \\
    & = \mathcal{L}_{k,h}(w_0) - |\{ \alpha \in -w_0(\Phi(w)) ~|~ \mathrm{ht}(\alpha) = h~\text{mod}~k\}|.
\end{align*}

Moreover, we also know that $\mathrm{ht}(w_0(\beta)) = -\mathrm{ht}(\beta)$ for any $\beta \in \Phi$. Therefore, it follows that 
\begin{align*}
    \{ \alpha \in -w_0(\Phi(w)) ~|~ \mathrm{ht}(\alpha) = h~\text{mod}~k\}  & = \{ -w_0(\beta) ~|~ \beta \in \Phi(w),~ \mathrm{ht}(-w_0(\beta)) = h~\text{mod}~k\} \\
    & = \{ -w_0(\beta) ~|~ \beta \in \Phi(w),~ \mathrm{ht}(\beta) = h~\text{mod}~k\}.
\end{align*}

However, it is clear that the following map is a bijection
$$
\begin{array}{ccc}
    \{ -w_0(\beta) ~|~ \beta \in \Phi(w),~ \mathrm{ht}(\beta) = h~\text{mod}~k\} & \longrightarrow & \{ \beta \in \Phi(w)~|~ \mathrm{ht}(\beta) = h~\text{mod}~k\} \\
     \gamma & \longmapsto & -w_0(\gamma).
\end{array}
$$
Since $\mathcal{L}_{k,h}(w) = |\{ \beta \in \Phi(w)~|~ \mathrm{ht}(\beta) = h~\text{mod}~k\}|$, we have shown our first equality. 

We show now the other equality: $\mathcal{L}_{k,h}(ww_0) = \mathcal{L}_{k,h}(w_0)-\mathcal{L}_{k,h}(w)$. 
By \Cref{other des L modular} and \Cref{lemma inv set} we have 
\begin{align*}
    \mathcal{L}_{k,h}(ww_0) & = |\{ t \in N(ww_0)~|~ \mathrm{ht}(\alpha_t) = h~\text{mod}~k\}| \\
    & = |\{ t \in  T \setminus N(w) ~|~ \mathrm{ht}(\alpha_t) = h~\text{mod}~k\}| \\
     & = |\{ t \in  T  ~|~ \mathrm{ht}(\alpha_t) = h~\text{mod}~k\} \setminus \{ t \in  N(w)  ~|~ \mathrm{ht}(\alpha_t) = h~\text{mod}~k\}| \\
     & = |\{t \in  T  ~|~ \mathrm{ht}(\alpha_t) = h~\text{mod}~k\}| - |\{t \in  N(w)  ~|~ \mathrm{ht}(\alpha_t) = h~\text{mod}~k\}| \\
    & = \mathcal{L}_{k,h}(w_0)-\mathcal{L}_{k,h}(w).
\end{align*}

    Point (4) is a direct consequence of (3).
\end{proof}

\bigskip

As for type $A_n$, the following question seems natural.
  \medskip
\begin{question}
    We may wonder if there is an analogue for $\mathcal{L}_{k,h}$ in terms of a map resembling to (\ref{odd-length type A}) ?
\end{question}

\medskip

\begin{conjecture}\label{conj gen}
    For any $W \neq W(F_4)$, for any $k \geq 3$, for any $h \geq 1$, the polynomial $\mathcal{B}_{k,h}(q)$ is unimodal. For $W = W(F_4)$, the polynomial $\mathcal{B}_{k,h}(q)$ is unimodal when $k \geq 4$ and for any $h \geq 1$.
\end{conjecture}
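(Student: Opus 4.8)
The plan is to exploit the palindromicity of $\mathcal{B}_{k,h}(q)$ that follows from the proposition just above. Property (3) shows that the involution $w \mapsto w_0 w$ carries the fiber $\{w \in W : \mathcal{L}_{k,h}(w) = d\}$ bijectively onto $\{w \in W : \mathcal{L}_{k,h}(w) = \mathcal{L}_{k,h}(w_0) - d\}$, so $\mathcal{B}_{k,h}(q)$ is symmetric about $\mathcal{L}_{k,h}(w_0)/2$. Unimodality of a symmetric polynomial with nonnegative coefficients is equivalent to its coefficient sequence being nondecreasing up to the center, so the whole problem reduces to producing, for each $d$ strictly below $\mathcal{L}_{k,h}(w_0)/2$, an injection from the level-$d$ fiber into the level-$(d+1)$ fiber.

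To build such an injection I would work with the weak order on $W$. Writing $H_{k,h} := \{\alpha \in \Phi^+ : \mathrm{ht}(\alpha) \equiv h \pmod k\}$, we have $\mathcal{L}_{k,h}(w) = |\Phi(w) \cap H_{k,h}|$, and as $w$ ranges over $W$ the root inversion set $\Phi(w)$ ranges exactly over the biclosed subsets of $\Phi^+$. Thus $\mathcal{B}_{k,h}$ is the generating function, by the statistic $|\Phi(w) \cap H_{k,h}|$, for the projection of the lattice of inversion sets onto the coordinates indexed by $H_{k,h}$. Each covering relation in the weak order adjoins a single positive root to $\Phi(w)$, so the natural goal is a cover-compatible matching that, level by level, increases $|\Phi(w) \cap H_{k,h}|$ by one; this is precisely the structure one would obtain from a symmetric chain decomposition of the projected poset.

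Failing a direct combinatorial construction, I would try the two standard analytic routes to unimodality: realizing $\mathcal{B}_{k,h}(q)$ as the Hilbert series of a graded algebra carrying a hard-Lefschetz $\mathfrak{sl}_2$-action, or proving real-rootedness, which would force log-concavity and hence unimodality; the tables in Figures~\ref{table} and~\ref{table2} are consistent with log-concavity away from the known failures. For the infinite families $A_n$, $B_n$, $D_n$ one can instead set up a recursion on the rank by peeling off a node of the Dynkin diagram and using the parabolic factorization $w = w^I w_I$, tracking how the height-mod-$k$ classes split across $W_I$ and $W^I$; if the resulting convolution decomposes $\mathcal{B}_{k,h}$ into a product of symmetric unimodal polynomials, unimodality would follow from the classical fact that such products are again symmetric unimodal. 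The finitely many exceptional types, together with the distinguished bad pair $(W(F_4), k = 3)$ that the statement deliberately excludes, would be verified by direct computation.

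The hard part will be the complete absence of a product formula for $\mathcal{B}_{k,h}$. For the full length the Poincar\'e polynomial factors as $\sum_{w} q^{\ell(w)} = \prod_i [d_i]_q$, and unimodality is immediate; the congruence restriction $\mathrm{ht}(\alpha) \equiv h \pmod k$ destroys this multiplicative structure, so neither the parabolic recursion nor the ``product of unimodal polynomials'' argument applies cleanly. The same congruence interacts delicately with the parabolic decomposition, since the height of a root is measured against the simple roots of $W$ and not those of $W_I$; controlling this interaction uniformly in $k$ and $h$, and in particular explaining why only $(W(F_4), k = 3)$ breaks unimodality, is in my view the central difficulty and is very likely the reason the statement is posed as a conjecture rather than proved.
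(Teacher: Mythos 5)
This statement is a \emph{conjecture} in the paper: the authors give no proof, only computational verification (types $A_n$ up to $n=12$, $B_n$ and $D_n$ up to $n=10$, together with the exceptional types, which is how the special role of $W(F_4)$ with $k=3$ was detected in the first place). So there is no proof in the paper to compare yours against, and your proposal, by your own closing admission, is not a proof either: it is a survey of strategies, none of which is carried out. The one step you actually complete is the reduction via palindromicity: property (3) of the proposition on $\mathcal{L}_{k,h}$ does show that $w\mapsto w_0w$ exchanges the fibers over $d$ and over $\mathcal{L}_{k,h}(w_0)-d$, so $\mathcal{B}_{k,h}(q)$ is symmetric and unimodality is equivalent to the coefficients being nondecreasing up to the center. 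That reduction is correct and consistent with everything in the paper, but it is also where all the difficulty begins, not where it ends.

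The genuine gap is that no injection from the level-$d$ fiber to the level-$(d+1)$ fiber is constructed, and each of the fallback routes you list has a concrete obstruction that you would need to overcome rather than merely name. A cover-compatible matching in weak order adjoins one positive root at a time, but that root need not lie in $H_{k,h}$, so covers do not in general raise the statistic by one; worse, the paper's own data show that $\mathcal{B}_{2,1}(q)$ fails to be unimodal in types $G_2$, $F_4$, $E_6$, in type $A_{11}$, and in several $D_n$, so no argument that is uniform in $k$ (symmetric chain decomposition of the projected poset, hard Lefschetz, real-rootedness) can possibly work: any correct proof must isolate exactly what changes when $k\geq 3$ (and $k\geq 4$ for $F_4$). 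Likewise the parabolic recursion $w=w^Iw_I$ does not yield a convolution of the statistic, because $\mathrm{ht}$ is computed in $\Phi(W)$ and the congruence class of a root of $W_I$ is not determined by its height inside $\Phi(W_I)$. You identify these difficulties honestly, but identifying them is not resolving them; as it stands the statement remains open, for you as for the authors.
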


\medskip

 \begin{figure}[h!]
\[
\begin{array}{r|r|r|r}
\text{Type} & A_n & B_n & D_n  \\
\hline
n ~\text{up to} & 12 & 10 & 10  \\
\end{array}
\]
\caption{Cases where \Cref{conj gen} has been checked for all adequate choices of $k,h$.}
\end{figure}

\medskip
 
\begin{remark}
It turns out that in types $G_2,~F_4,~E_6$, the polynomial $\mathcal{B}_{2,1}(q)$ is not unimodal, for $E_7$ and $B_n$ with $n \in \{5,\dots, 13\}$ it is unimodal, and for $D_n$, it is  unimodal for $n=6$ and $n=10$, where we have checked for $n \in \{4,\dots,10\}$. We give in \Cref{table Bn} and \Cref{table Dn} the coefficients of $\mathcal{B}_{2,1}(q)$ in types $B_n$ and $D_n$.
\end{remark}

Link to GitHub repository:
\url{https://github.com/jfromentin/coxmodlen/}

\bigskip

\bibliographystyle{plain}
\bibliography{Biblio.bib}

\bigskip

\noindent Nathan Chapelier-Laget \\
Université du Littoral Côte d'Opale,\\
Calais, 62228, France\\
\texttt{nathan.chapelier@gmail.com}

\bigskip

\noindent Jean Fromentin \\
Université du Littoral Côte d'Opale,\\
Calais, 62228, France\\
\texttt{jean.fromentin@univ-littoral.fr}

\newpage

\begin{table}[h!]
\[
\small
\begin{array}{r|r|r|r|r|r|r|r|r|r|r}
d & N_{2} & N_{3} & N_{4} & N_{5} & N_{6} & N_{7} & N_{8} & N_{9} & N_{10} \\
\hline
0  & 1 & 1  & 1  & 1   & 1    & 1     & 1       & 1         & 1         \\
1  & \textcolor{blue}{3} & 7  & 11 & 15  & 19   & 23    & 27      & 31        & 35        \\
2  & \textcolor{blue}{3}  & 11 & 25 & 55  & 105  & 171   & 253     & 351       & 465       \\
3  & 1 & \textcolor{red}{10} & 52 & 166 & 344  & 646   & 1168    & 1986      & 3156      \\
4  & - & 11 & 70 & 274 & 762  & 1998  & 4466    & 8494      & 15018     \\
5  & - & 7  & \textcolor{red}{66} & 367 & 1387 & 4365  & 11043   & 25025     & 53735     \\
6  & - & 1  & 70 & 507 & 2442 & 9168  & 50151   & 66836     & 160560    \\
7  & - & -  & 52 & \textcolor{blue}{535}  & 3384 & 15287 & 88018   & 153295    & 410651    \\
8  & - & -  & 25 & \textcolor{blue}{535}  & 4118 & 21931 & 142191  & 308066    & 905126    \\
9  & - & -  & 11 & 507 & 5073 & 31544 & 220588  & 559876    & 1823644   \\
10 & - & -  & 1  & 367 & \textcolor{blue}{5405} & 40607 & 309201  & 989001    & 3440736   \\
11 & - & -  & -  & 274 & \textcolor{blue}{5405} & 48783 & 401116  & 1552734   & 6052617   \\
12 & - & -  & -  & 166 & 5073 & 55590 & 517149  & 2261174   & 9973305   \\
13 & - & -  & -  & 55  & 4118 & 60632 & 621804  & 3227833   & 15602907  \\
14 & - & -  & -  & 15  & 3384 & \textcolor{blue}{63628} & 712313  & 4338023   & 23281437  \\
15 & - & -  & -  & 1   & 2442 & 60632 & 786655  & 5611753   & 33719508  \\
16 & - & -  & -  & -   & 1387 & 55590 & 837237  & 6953616   & 46357542  \\
17 & - & -  & -  & -   & 762  & 48783 & \textcolor{blue}{864398}  & 8358201   & 61168114  \\
18 & - & -  & -  & -   & 344  & 40607 & 837237  & 9758875   & 79108275  \\
19 & - & -  & -  & -   & 105  & 31544 & 786655  & 10968189  & 98758261  \\
20 & - & -  & -  & -   & 19   & 21931 & 712313  & 11994272  & 119860063 \\
21 & - & -  & -  & -   & 1    & 15287 & 621804  & 12655795  & 141245583 \\
22 & - & -  & -  & -   & -    & 9168  & 517149  & \textcolor{blue}{13103853}  & 162653469 \\
23 & - & -  & -  & -   & -    & 4365  & 401116  & \textcolor{blue}{13103853}  & 183178248 \\
24 & - & -  & -  & -   & -    & 1998  & 309201  & 12655795  & 200648380 \\
25 & - & -  & -  & -   & -    & 646   & 220588  & 11994272  & 214964837 \\
26 & - & -  & -  & -   & -    & 171   & 142191  & 10968189  & 224414283 \\
27 & - & -  & -  & -   & -    & 23    & 88018   & 9758875   & \textcolor{blue}{230145644} \\
28 & - & -  & -  & -   & -    & 1     & 50151   & 8358201   & \textcolor{blue}{230145644} \\
29 & - & -  & -  & -   & -    & -     & 11043   & 6953616   & 224414283 \\
30 & - & -  & -  & -   & -    & -     & 4466    & 5611753   & 214964837 \\
31 & - & -  & -  & -   & -    & -     & 1168    & 4338023   & 200648380 \\
32 & - & -  & -  & -   & -    & -     & 253     & 3227833   & 183178248 \\
33 & - & -  & -  & -   & -    & -     & 27      & 2261174   & 162653469 \\
34 & - & -  & -  & -   & -    & -     & 1       & 1552734   & 141245583 \\
35 & - & -  & -  & -   & -    & -     & -       & 989001    & 119860063 \\
36 & - & -  & -  & -   & -    & -     & -       & 559876    & 98758261  \\
37 & - & -  & -  & -   & -    & -     & -       & 308066    & 79108275  \\
38 & - & -  & -  & -   & -    & -     & -       & 153295    & 61168114  \\
39 & - & -  & -  & -   & -    & -     & -       & 66836     & 46357542  \\
40 & - & -  & -  & -   & -    & -     & -       & 25025     & 33719508  \\
41 & - & -  & -  & -   & -    & -     & -       & 8494      & 23281437  \\
42 & - & -  & -  & -   & -    & -     & -       & 1986      & 15602907  \\
43 & - & -  & -  & -   & -    & -     & -       & 351       & 9973305   \\
44 & - & -  & -  & -   & -    & -     & -       & 31        & 6052617   \\
45 & - & -  & -  & -   & -    & -     & -       & 1         & 3440736   \\
46 & - & -  & -  & -   & -    & -     & -       & -         & 1823644   \\
47 & - & -  & -  & -   & -    & -     & -       & -         & 905126    \\
48 & - & -  & -  & -   & -    & -     & -       & -         & 410651    \\
49 & - & -  & -  & -   & -    & -     & -       & -         & 160560    \\
50 & - & -  & -  & -   & -    & -     & -       & -         & 53735     \\
51 & - & -  & -  & -   & -    & -     & -       & -         & 15018     \\
52 & - & -  & -  & -   & -    & -     & -       & -         & 3156      
\end{array}
\]
\caption{Coefficients of $\mathcal{B}_{2,1}(q)$ in type $B_n$.}
\label{table Bn}
\end{table}

\newpage

\begin{table}[h!]
\[
\small
\begin{array}{r|r|r|r|r|r|r|r|r|r}
d & N_{4} & N_{5} & N_{6} & N_{7} & N_{8} & N_{9} & N_{10}  \\
\hline
0  & 1   & 1    & 1       & 1       & 1       & 1       & 1       \\
1  & 14  & 18   & 22      & 26      & 30      & 34      & 38      \\
2  & 16  & 48   & 112     & 194     & 288     & 398     & 524     \\
3  & 50  & 194  & 422     & 696     & 1242    & 2170    & 3506    \\
4  & \textcolor{red}{30}  & 199  & 671     & 2037    & 5163    & 9911    & 16891   \\
5  & 50  & 364  & 1272    & 4108    & 10940   & 24948   & 56632   \\
6  & 16  & \textcolor{red}{272}  & 2009    & 10196   & 30960   & 73892   & 177492  \\
7  & 14  & 364  & 2474    & 12638   & 45108   & 148792  & 444948  \\
8  & 1   & 199  & 2967    & 21879   & 90290   & 319795  & 969675  \\
9  & -   & 194  & \textcolor{blue}{3140}    & 24296   & 118020  & 534758  & 1921582 \\
10 & -   & 48   & 2967    & 33354   & 205208  & 1034454 & 3722137 \\
11 & -   & 18   & 2474    & \textcolor{red}{31964}   & 229324  & 1384762 & 6004440 \\
12 & -   & 1    & 2009    & 39782   & 346165  & 2224944 & 9881508 \\
13 & -   & -    & 1272    & \textcolor{red}{31964}   & 348750  & 2752614 & 14576416 \\
14 & -   & -    & 671     & 33354   & 463064  & 3901050 & 21446815 \\
15 & -   & -    & 422     & 24296   & \textcolor{red}{426234}  & 4403418 & 29859470 \\
16 & -   & -    & 112     & 21879   & 519386  & 5755692 & 39762140 \\
17 & -   & -    & 22      & 12638   & \textcolor{red}{426234}  & 5942246 & 51028466 \\
18 & -   & -    & 1       & 10196   & 463064  & 7148158 & 63595343 \\
19 & -   & -    & -       & 4108    & 348750  & \textcolor{red}{6924658} & 76383568 \\
20 & -   & -    & -       & 2037    & 346165  & 7723890 & 89380382 \\
21 & -   & -    & -       & 696     & 229324  & \textcolor{red}{6924658} & 101536232 \\
22 & -   & -    & -       & 194     & 205208  & 7148158 & 111199760 \\
23 & -   & -    & -       & 26      & 118020  & 5942246 & 119505284 \\
24 & -   & -    & -       & 1       & 90290   & 5755692 & 124333732 \\
25 & -   & -    & -       & -       & 45108   & 4403418 & \textcolor{blue}{126331636} \\
26 & -   & -    & -       & -       & 30960   & 3901050 & 124333732 \\
27 & -   & -    & -       & -       & 10940   & 2752614 & 119505284 \\
28 & -   & -    & -       & -       & 5163    & 2224944 & 111199760 \\
29 & -   & -    & -       & -       & 1242    & 1384762 & 101536232 \\
30 & -   & -    & -       & -       & 288     & 1034454 & 89380382 \\
31 & -   & -    & -       & -       & 30      & 534758  & 76383568 \\
32 & -   & -    & -       & -       & 1       & 319795  & 63595343 \\
33 & -   & -    & -       & -       & -       & 148792  & 51028466 \\
34 & -   & -    & -       & -       & -       & 73892   & 39762140 \\
35 & -   & -    & -       & -       & -       & 24948   & 29859470 \\
36 & -   & -    & -       & -       & -       & 9911    & 21446815 \\
37 & -   & -    & -       & -       & -       & 2170    & 14576416 \\
38 & -   & -    & -       & -       & -       & 398     & 9881508  \\
39 & -   & -    & -       & -       & -       & 34      & 6004440  \\
40 & -   & -    & -       & -       & -       & 1       & 3722137  \\
41 & -   & -    & -       & -       & -       & -       & 1921582  \\
42 & -   & -    & -       & -       & -       & -       & 969675   \\
43 & -   & -    & -       & -       & -       & -       & 444948   \\
44 & -   & -    & -       & -       & -       & -       & 177492   \\
45 & -   & -    & -       & -       & -       & -       & 56632    \\
46 & -   & -    & -       & -       & -       & -       & 16891    \\
47 & -   & -    & -       & -       & -       & -       & 3506     \\
48 & -   & -    & -       & -       & -       & -       & 524      \\
49 & -   & -    & -       & -       & -       & -       & 38       \\
50 & -   & -    & -       & -       & -       & -       & 1        \\
\end{array}
\]
\caption{Coefficients of $\mathcal{B}_{2,1}(q)$ in type $D_n$.}
\label{table Dn}
\end{table}

\end{document}